\def\@eqnnum{\rm ([section].\theequation)}
\numberwithin{equation}{section}
\newtheorem{df}{Definition}[section]
\newtheorem{p}[df]{Proposition}
\newtheorem{corol}[df]{Corollary}
\theoremstyle{definition}
\newtheorem{ex}[df]{Example}
\title{\bf  Polynomials in algebraic analysis}
\author{Piotr Multarzy\'{n}ski\\{\it Faculty of Mathematics and Information Science}\\
 {Warsaw University of Technology} \\
{\it 00-661 Warsaw, Pl. Politechniki 1, Poland}\\
e-mail: multarz@mini.pw.edu.pl}
\date{}
\begin{document}
\maketitle
\begin{abstract}
The concept of polynomials in the sense of algebraic analysis, for a single right invertible linear operator, was introduced and studied originally by D. Przeworska-Rolewicz \cite{DPR}. One of the elegant results corresponding with that notion is a purely algebraic version of the Taylor formula, being a generalization of its usual counterpart, well known for functions of one variable. 
In quantum calculus there are some specific discrete derivations analyzed, which are right invertible linear operators \cite{kac}. Hence, with such quantum derivations one can associate the corresponding concept of algebraic polynomials and consequently the quantum calculus version of Taylor formula \cite{MULT2}. In the present paper we define and analyze, in the sense of algebraic analysis, polynomials corresponding with a given family of right invertible operators. Within this approach we generalize the usual polynomials of several variables.
\end{abstract}
{\bf Keywords:} algebraic analysis, right invertible operator,  difference operator, Taylor formula, quantum calculus;  \\
{\bf MSC 2000: 12H10, 39A12, 39A70}
\section{Introduction}
Algebraic analysis, founded and developed by D. Przeworska-Rolewicz \cite{DPR}, 
is an algebra-based theory unifying many different generalizations of derivatives and integrals, not necessarily continuous. The main concepts of this algebraic formulation are right invertible linear operators, their right inverses and associated initial operators. 
Right invertible operators are considered to be algebraic  counterparts of derivatives and their right inverses together with  initial operators correspond with the idea of integration. Amongst many examples one can interpret in terms of algebraic analysis there are e.g. usual differential and integral calculus, generalized differential calculus in rings \cite{Moh} and many different quantum calculi \cite{kac, MULT1,  Hart, Jackson}.  With a fixed right invertible operator $D$, defined in a linear space $X$, one can naturally associate the concept of  $D$-polynomials and algebraically formulate Taylor formula \cite{DPR, MULT2}. However, the $D$-polynomials, Taylor formula, definite integrals associated with a single right invertible operator $D$ constitute the algebraic counterparts corresponding with mathematical analysis for functions of one variable. Therefore, there is a natural need to extend algebraic analysis in order to algebraically generalize ideas from mathematical analysis for functions of many variables. To begin this direction, 
we  replace a single operator $D$ by a fixed family $\cal D$ of right invertible operators and study the corresponding $\cal D$-polynomials.
\section{Preliminaries}
Let $X$ be a linear space over a field $\mathbb K$ and ${\cal L}(X)$
be the family of all linear mappings $D:U\rightarrow V$, for any
$U$, $V$ - linear subspaces of $X$. We shall use the notation
$dom(D)=U$, $codom(D)=V$ and $im\, D=\{Du: u\in U\}$ for the domain,
codomain and image of $D$, correspondingly.

Throughout this paper we use the notation
\begin{equation}
{\mathbb N}=\{1,2,3,\ldots\}\;\;\; \mbox{and}\;\;\; {\mathbb
N}_0=\{0,1,2,3,\ldots\}\; .
\end{equation}

Whenever $D_1=\ldots =D_m=D\in {\cal L}(X)$, we shall write
$D^m=D_1\ldots D_m$, for $m\in\mathbb N$,  and additionally
$D^0=I\equiv id_{dom(D)}$.
By the space of constants for $D\in{\cal L}(X)$  we shall mean the family
\begin{equation}\label{ZD}
 Z(D)=ker\,D\, .
\end{equation}

For any $D\in{\cal L}(X)$ and  $m\in\mathbb N$, we assume the
notation
\begin{equation}
Z_0(D)=ker D\setminus\{0\}\;\;\;\mbox{and}\;\;\;Z_m(D)=ker D^{m+1}\setminus ker
D^{m}\; .
\end{equation}
 Evidently, for any $D\in{\cal L}(X)$
there is
\begin{equation}\label{Zdisjoint}
Z_i(D)\cap Z_j(D)=\emptyset ,
\end{equation}
whenever $i\neq j$.
\begin{p}\label{linindep}
Let $D\in{\cal L}(X)$, $m\in{\mathbb N}_0$, and $Z_i(D)\neq\emptyset$
for $i=0,\ldots ,m$. Then, any elements $u_i\in Z_i(D)$, $i=0,\ldots
, m$, are linearly independent.
\end{p}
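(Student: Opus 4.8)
The plan is to use the classical ``Jordan-chain'' argument: assume a vanishing linear combination $\sum_{i=0}^{m}c_i u_i=0$ and strip off its coefficients one at a time by applying successively lower powers of $D$, starting from $D^{m}$.

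First I would record a uniform description of membership in $Z_i(D)$. Since $D^0=I$ and hence $ker\,D^0=\{0\}$, we may write $Z_0(D)=ker\,D^1\setminus ker\,D^0$, so that for \emph{every} $i\in\{0,\dots,m\}$ an element $u_i\in Z_i(D)$ satisfies $D^{i+1}u_i=0$ while $D^{i}u_i\neq 0$. I would also dispose of the domain bookkeeping forced by working in ${\cal L}(X)$: if $u\in ker\,D^{i+1}$ then $u\in dom(D^{i+1})$, and since $D^{i+1}u=0$ lies in every linear subspace, $u$ in fact lies in $dom(D^{k})$ with $D^{k}u=0$ for all $k\geq i+1$; in particular $ker\,D^{i+1}\subseteq ker\,D^{m}$ whenever $i+1\leq m$, and each $u_i$ ($i=0,\dots,m$) belongs to $dom(D^{m})$. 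This is what makes the following termwise applications of $D^{m}$ (and of its lower powers) legitimate.

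Now suppose $\sum_{i=0}^{m}c_i u_i=0$ with $c_i\in{\mathbb K}$. Applying $D^{m}$ and using $D^{m}u_i=0$ for all $i\leq m-1$ leaves $c_m D^{m}u_m=0$; since $u_m\in Z_m(D)$ gives $D^{m}u_m\neq 0$, we get $c_m=0$. The relation becomes $\sum_{i=0}^{m-1}c_i u_i=0$, and applying $D^{m-1}$ in the same way yields $c_{m-1}=0$; iterating downward (a finite induction on $m$) we obtain $c_{m-1}=c_{m-2}=\dots=c_1=0$, and finally $c_0 u_0=0$ with $u_0\neq 0$ forces $c_0=0$. Hence $u_0,\dots,u_m$ are linearly independent.

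I do not expect a genuine obstacle; the only point needing care is the middle paragraph, namely checking that in the purely algebraic setting of ${\cal L}(X)$ the compositions $D^{m}u_i$ are defined and the kernel inclusions hold, since for everywhere-defined operators this is automatic. An equivalent and perhaps cleaner write-up is by induction on $m$: the base case $m=0$ is immediate because $u_0\in Z_0(D)$ means $u_0\neq 0$; for the step, apply a single $D$ to $\sum_{i=0}^{m}c_i u_i=0$, note that $Du_0=0$ and $Du_i\in Z_{i-1}(D)$ for $i=1,\dots,m$, invoke the inductive hypothesis for the $m$ elements $Du_1,\dots,Du_m$ (lying in $Z_0(D),\dots,Z_{m-1}(D)$ respectively) to conclude $c_1=\dots=c_m=0$, and then $c_0=0$ as before.
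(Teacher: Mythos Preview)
Your argument is correct and is essentially the same as the paper's: assume $\sum_i \lambda_i u_i=0$, apply $D^{m}$ to force $\lambda_m=0$, then $D^{m-1}$ for $\lambda_{m-1}=0$, and so on down to $\lambda_0=0$. The only difference is that you are more explicit about domain bookkeeping and offer an alternative single-$D$ induction, both of which the paper omits.
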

\begin{proof}
Consider a linear combination $u=\lambda_0 u_0+\ldots +\lambda_m u_m$
and suppose that $u=0$ for some coefficients $\lambda_0,\ldots
,\lambda_m\in{\mathbb K}$. Hence we obtain the sequence of
equations: $D^{k}u=\lambda_{k}D^{k}u_{k}+\ldots +\lambda_m
D^{k}u_{m}=0$, for $k=0,\ldots ,m$. Step by step, from these
equations we compute $\lambda_m=0,\ldots ,\lambda_0 =0$.
\end{proof}

Let us define
\begin{equation}\label{calR(X)}
{\cal R}(X)=\{D\in{\cal L}(X): codom(D)=im\,D\},
\end{equation}
 i.e. each element
$D\in{\cal R}(X)$ is considered to be a surjective mapping (onto its
codomain). Thus, ${\cal R}(X)$ consists of all right invertible
elements.
\begin{df}\label{Rinverse}
 An operator $R\in{\cal L}(X)$ is said to be a right inverse of
 $D\in{\cal R}(X)$ if $dom(R)=im(D)$ and $DR=I\equiv id_{im(D)}$.
By ${\cal R}_{D}$ we denote the family of all right inverses of $D$.
\end{df}
In fact, ${\cal R}_{D}$ is a nonempty family, since for each $y\in
im(D)$ we can select an element $x\in D^{-1}(\{y\})$ and define
$R\in{\cal R}_{D}$ such that $R: y\mapsto x$.

The fundamental role in the calculus of right invertible operators
play the so-called initial operators, projecting the domains of
linear operators onto the corresponding space of their constants.
\begin{df}\label{initial}
Any operator $F\in{\cal L}(X)$, such that $dom(F)=dom(D)$,
$im(F)=Z(D)$ and $F^2=F$ is said to be an initial operator induced
by $D\in{\cal R}(X)$. We say that an initial operator $F$
corresponds to a right inverse $R\in{\cal R}_{D}$ whenever $FR=0$ or
equivalently if
\begin{equation}\label{FbyR}
F=I-RD\, .
\end{equation}
The family of all initial operators induced by $D$ will be denoted
by ${\cal F}_D$.
\end{df}
The families ${\cal R}_{D}$ and ${\cal F}_D$ uniquely determine each
other. Indeed, formula (\ref{FbyR}) characterizes initial operators
by means of right inverses, whereas formula
\begin{equation}\label{RbyF}
R=R'-FR'\, ,
\end{equation}
which is independent of $R'$, characterizes right inverses by means
of initial operators. Both families ${\cal R}_D$ and ${\cal F}_D$
are fully characterized by formulae
\begin{equation}\label{characterizeR}
{\cal R}_D=\{R+FA: dom\,A=im\,D,\, A\in{\cal L}(X)\},
\end{equation}
\begin{equation}\label{characterizeF}
{\cal F}_D=\{F(I-AD): dom\,A=im\,D,\, A\in{\cal L}(X)\},
\end{equation}
where    $R\in{\cal R}_{D}$ and $F\in{\cal F}_{D}$ are fixed
arbitrarily.
\vspace{3mm}

Let us illustrate the above concepts with two basic examples.
\begin{ex}\label{derivative} $X={\mathbb R}^{\mathbb R}$ - the linear space of all
functions, $D\in{\cal R}(X)$ - usual derivative, i.e.
 $Dx(t)\equiv x'(t)$, with $dom(D)\subset X$ consisting of all differentiable functions.
 Then, for an arbitrarily fixed $a\in\mathbb R$, by formula
 $Rx(t)=\int\limits_{a}^{t}x(s)ds$ one can define a right inverse $R\in{\cal
R}_{D}$ and the initial operator $F\in{\cal F}_D$ corresponding to
$R$ is given by $Fx(t)=x(a)$.
\end{ex}
\begin{ex}\label{difference}  $X={\mathbb {\mathbb
R}^{\mathbb N}}$ - the linear space of all sequences, $D\in{\cal
R}(X)$ - difference operator, i.e.
 $(Dx)_n=x_{n+1}-x_n$, for $n\in\mathbb N$. A right inverse $R\in{\cal
 R}_{D}$ is defined by the formulae $(Rx)_{1}=0$ and $(Rx)_{n+1}=\sum\limits_{i=1}^{n}x_{i}$  while
 $(Fx)_n=x_1$ defines the initial operator $F\in{\cal F}_D$ corresponding to $R$.
\end{ex}
An immediate consequence of Definition \ref{initial}, for an
invertible operator $D\in{\cal R}(X)$, i.e. $ker\, D=\{0\}$, is that
${\cal F}_{D}=\{0\}$. Therefore, the nontrivial initial operators do
exist only for operators which are right invertible but not
invertible. The family of all such operators is then
\begin{equation}\label{nontriv}
{\cal R}^+(X)=\{D\in{\cal R}(X): dim\, Z(D) >0 \}.
\end{equation}
\begin{p}[Taylor Formula]
Suppose $D\in{\cal R}(X)$ and let $F\in{\cal F}_{D}$ be an initial
operator corresponding to $R\in{\cal R}_{D}$. Then the operator
identity
\begin{equation}\label{taylor}
I=\sum_{k=0}^{m}R^kFD^k + R^{m+1}D^{m+1}\, ,
\end{equation}
holds on $dom(D^{m+1})$,  for $m\in{\mathbb N}_0$.
\end{p}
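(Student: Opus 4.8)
The plan is to proceed by induction on $m\in{\mathbb N}_0$. For the base case $m=0$ the asserted identity reduces, using $R^0=D^0=I$, to $I=F+RD$ on $dom(D)$, which is precisely the characterization (\ref{FbyR}) of the initial operator $F$ corresponding to the right inverse $R$; so there is nothing to prove.

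For the inductive step I would assume that $I=\sum_{k=0}^{m}R^kFD^k+R^{m+1}D^{m+1}$ holds on $dom(D^{m+1})$ and derive the same identity with $m$ replaced by $m+1$ on $dom(D^{m+2})$. The key observation is that for $x\in dom(D^{m+2})$ one has $x\in dom(D^{m+1})$, so the inductive hypothesis applies to $x$, and moreover $D^{m+1}x\in dom(D)$ because $D^{m+2}x=D(D^{m+1}x)$ is defined; hence the base identity may be applied to the element $D^{m+1}x$, giving $D^{m+1}x=FD^{m+1}x+RD^{m+2}x$. Applying $R^{m+1}$ to this relation and substituting the result into the last summand $R^{m+1}D^{m+1}x$ of the inductive hypothesis replaces that term by $R^{m+1}FD^{m+1}x+R^{m+2}D^{m+2}x$; regrouping the summands then gives $x=\sum_{k=0}^{m+1}R^kFD^kx+R^{m+2}D^{m+2}x$, which is the required identity and closes the induction.

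I expect the only real care to be needed in the domain/codomain bookkeeping, not in the algebra: one has to check that every composite operator written down is actually defined on the subspace under consideration. I would dispatch this by recording at the outset the nesting $dom(D^{m+2})\subseteq dom(D^{m+1})\subseteq\cdots\subseteq dom(D)$ and noting how $R$ and $F$ compose with the powers $D^k$ (using $dom(R)=im\,D$, $im\,F=Z(D)$), so that each term $R^kFD^k$ and $R^{m+1}D^{m+1}$ is meaningful on $dom(D^{m+1})$; with these inclusions in place the substitution step above is purely formal. A computational alternative, requiring the same bookkeeping, is to avoid induction and telescope directly: on $dom(D^{m+1})$ one has $R^kFD^k=R^k(I-RD)D^k=R^kD^k-R^{k+1}D^{k+1}$, and summing over $k=0,\ldots,m$ collapses the sum to $I-R^{m+1}D^{m+1}$, which rearranges to (\ref{taylor}).
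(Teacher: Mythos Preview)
Your proof is correct and follows essentially the same approach as the paper, which simply records ``(Induction)'' and cites \cite{DPR}; your base case $I=F+RD$ from (\ref{FbyR}) and the inductive splitting of the remainder term $R^{m+1}D^{m+1}$ via $D^{m+1}x=FD^{m+1}x+RD^{m+2}x$ is exactly the standard argument. The telescoping variant you mention at the end is also valid and arguably cleaner, but it is not a genuinely different route---it is the same identity $F=I-RD$ applied at each level, just summed rather than iterated.
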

\begin{proof}(Induction) See Ref.\cite{DPR}.
\end{proof}
Equivalent identity,   expressed as
\begin{equation}\label{xtaylor}
x=\sum_{k=0}^{m}R^kFD^kx + R^{m+1}D^{m+1}x\, ,
\end{equation}
for $x\in dom(D^{m+1})$ and $m\in{\mathbb N}_0$, is an algebraic
counterpart of the Taylor expansion formula,  commonly known in
mathematical analysis, for functions of one variable. The first component of formula (\ref{xtaylor})
reflects the polynomial part while the second one can be viewed as
the corresponding remainder.

\begin{ex}To clearly demonstrate the resemblance of formula (\ref{xtaylor})
with the commonly used Taylor expression, we take $D$, $R$ and $F$
as in Example \ref{derivative}. Since there are many forms of the
remainders in use, it is more interesting to calculate the polynomial
part, which gives the well known result
$\sum\limits_{k=0}^{m}R^kFD^kx(t)=\sum\limits_{k=0}^{m}\frac{x^{(k)}(a)}{k!}(t-a)^{k}$,
for any function $x\in dom(D^{m+1})$.
\end{ex}
\begin{p}\label{nillpot}
Let $D\in {\cal R}(X)$ and $R\in{\cal R}_{D}$. Then $R$ is not a
nilpotent operator.
\end{p}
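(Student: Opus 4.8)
The plan is a short proof by contradiction, resting entirely on the cancellation rule $DR=I$ from Definition \ref{Rinverse}. The first step is to record, by an immediate induction on $k$, the identity
\begin{equation}
D^{k}R^{k}=I\quad\text{on } im\,D,\qquad k\in\mathbb N .
\end{equation}
Indeed, the inductive step is $D^{k+1}R^{k+1}=D^{k}(DR)R^{k}=D^{k}R^{k}$, and, more generally, applying $D$ to any power of $R$ just lowers that power: $DR^{j}=(DR)R^{j-1}=R^{j-1}$, so each of the compositions written above is meaningful (the intermediate terms keep landing in $dom(D)$).

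Now suppose, for contradiction, that $R$ is nilpotent, say $R^{n}=0$ for some $n\in\mathbb N$. Then $D^{n}R^{n}=0$, while the identity above gives $D^{n}R^{n}=I$; hence $I=0$ on $im\,D$, so $im\,D=\{0\}$ and therefore $codom(D)=im\,D=\{0\}$, i.e. $D$ is the zero map. This is excluded (equivalently, one assumes at the outset that $im\,D\neq\{0\}$, as in the basic examples of Section 2), and the contradiction proves the claim. A variant avoiding the identity: with $n$ taken minimal, the relation $R^{n-1}=DR^{n}=0$ contradicts minimality when $n\ge2$, and reads $I=0$ when $n=1$ (since $R^{0}=I$) --- the same conclusion as before.

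The computation is trivial; what really requires attention is the domain/codomain bookkeeping of the abstract framework of Section 2. One must first be in a setting in which the powers $R^{n}$ are meaningful at all --- i.e. $im\,R\subseteq dom(R)=im\,D$, so that $R$ can be composed with itself, which is automatic whenever $R$ maps $X$ into itself, as in Example \ref{difference} --- and one must apply $DR=I$ only on $im\,D$, where it is actually asserted. Beyond that record-keeping, and the trivial exclusion of the degenerate case $im\,D=\{0\}$, I foresee no real obstacle.
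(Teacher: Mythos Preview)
Your proposal is correct, and in fact the ``variant'' you sketch at the end --- take $n$ minimal with $R^{n}=0$ and note $R^{n-1}=DR^{n}=0$ --- is exactly the paper's own proof (stated there with the index shifted by one: $R^{n}\neq 0$, $R^{n+1}=0$, hence $R^{n}=DR\cdot R^{n}=DR^{n+1}=0$). Your first approach via $D^{k}R^{k}=I$ is a harmless repackaging of the same cancellation, and you are actually more careful than the paper in flagging the degenerate case $im\,D=\{0\}$, which the paper silently ignores.
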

\begin{proof}
Suppose that $R^n\neq 0$ and $R^{n+1}=0$, for some $n\in\mathbb N$.
Then $0\neq R^n=IR^n=DRR^n=DR^{n+1}=0$, a contradiction.
\end{proof}
\begin{p}\label{nonemptyZm}
If $D\in{\cal R}^+(X)$, then $Z_m(D)\neq\emptyset$, for any
$m\in\mathbb N_0$.
\end{p}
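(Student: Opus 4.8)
The plan is to exhibit, for each $m\in{\mathbb N}_0$, a concrete element of $ker\, D^{m+1}\setminus ker\, D^m$; informally, one picks a nonzero ``constant'' of $D$ and \emph{integrates} it $m$ times. The case $m=0$ is immediate: since $D\in{\cal R}^+(X)$ we have $dim\, Z(D)>0$, so there is a nonzero $z\in Z(D)=ker\, D$, and then $z\in ker\, D\setminus\{0\}=Z_0(D)$.

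For $m\in{\mathbb N}$ fix such a $z$ together with a right inverse $R\in{\cal R}_{D}$, and set $u_m:=R^m z$. Using $DR=I$ on $im(D)$, a telescoping computation gives $D^k u_m=R^{\,m-k}z$ for $0\le k\le m$. In particular $D^m u_m=z\neq 0$, so $u_m\notin ker\, D^m$, while $D^{m+1}u_m=D(D^m u_m)=Dz=0$, so $u_m\in ker\, D^{m+1}$. Hence $u_m\in ker\, D^{m+1}\setminus ker\, D^m=Z_m(D)$, which is therefore nonempty.

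The one thing that needs care is the bookkeeping of domains: for $R^m$ to be applicable to $z$ and for the compositions $D^k(R^m z)$ to make sense, each partial image $z,\, Rz,\,\ldots,\, R^{m-1}z$ must lie in $dom(R)=im(D)$ (and, a fortiori, in $dom(D)$). In the present framework this is automatic, since $ker\, D\subseteq dom(D)\subseteq im(D)$ and $im(R)\subseteq dom(D)\subseteq im(D)$, so $z\in dom(R)$ and $R$ maps $dom(R)$ into itself. Equivalently, one may avoid iterating $R$ and argue by induction on $m$: given $u\in Z_{m-1}(D)\subseteq dom(D)\subseteq im(D)$, surjectivity of $D$ onto $im(D)$ provides a $v$ with $Dv=u$, and then $D^{m+1}v=D^m u=0$ while $D^m v=D^{m-1}u\neq 0$, so $v\in Z_m(D)$. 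Once well-definedness is granted, the only remaining computation is the telescoping of $DR=I$, which is entirely routine; so I do not expect a genuine obstacle beyond keeping the partial-operator conventions straight.
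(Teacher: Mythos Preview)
Your argument is correct and follows exactly the paper's route: pick a nonzero $z\in Z(D)$ and show $R^{m}z\in Z_m(D)$ via $D^kR^m z=R^{m-k}z$. The paper's proof is just the two-line version of this, omitting the telescoping computation and the domain discussion; your added remarks about well-definedness of $R^m z$ (and the alternative inductive phrasing) go beyond what the paper spells out, but note that the inclusion $\mathrm{dom}(D)\subseteq\mathrm{im}(D)$ you invoke is not actually stated as a standing hypothesis in the paper---it is simply taken for granted there, as it is in your argument.
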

\begin{proof}
The relation $Z_0(D)\neq\emptyset$ is straightforward. Let
$R\in{\cal R}_{D}$ and $z\in Z_0(D)$ be arbitrarily chosen elements.
Then, for any $m\in\mathbb N$, there is $R^{m}z\in Z_m(D)$.
\end{proof}
In algebraic analysis, with any right invertible operator $D\in{\cal R}^+(X)$  we
associate the following concept of $D$-polynomials.
\begin{df}\label{deg-polynomials}
If $D\in{\cal R}^+(X)$, then any element $u\in Z_{m}(D)$ is said
to be a $D$-polynomial of degree m, i.e. $deg\, u = m$, for
$m\in{\mathbb N}_0$. We assign degree $deg\, u=-\infty$ to the zero polynomial
$u=0$.
\end{df}
\begin{p}\label{polsumRz}
If $D\in{\cal R}^+(X)$ and $R\in{\cal R}_{D}$, then for any
$D$-polynomial $u\in Z_{m}(D)$ there exist elements $z_0,\ldots
,z_m\in Z_0(D)$ such that
\begin{equation}\label{uRz}
u=z_0+Rz_1+\ldots +R^mz_m .
\end{equation}
\end{p}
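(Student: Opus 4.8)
The statement is the Taylor formula (\ref{xtaylor}) read backwards, so the plan is to apply that identity to $u$ and then unpack what it gives. First I would observe that membership $u\in Z_m(D)$ means $D^{m+1}u=0$ while $D^{m}u\neq 0$, and in particular $u\in dom(D^{m+1})$; hence the hypotheses of the Taylor formula are met for this particular $u$, with $F\in{\cal F}_D$ taken to be the initial operator corresponding to the given $R\in{\cal R}_D$.

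Applying (\ref{xtaylor}) to $u$ and discarding the remainder $R^{m+1}D^{m+1}u=0$ leaves
\[
u=\sum_{k=0}^{m}R^{k}FD^{k}u=FD^{0}u+R\,FD^{1}u+\ldots +R^{m}FD^{m}u .
\]
So I would set $z_k:=FD^{k}u$ for $k=0,\ldots ,m$. Since $F$ is an initial operator induced by $D$, we have $im(F)=Z(D)=ker\,D$, so each $z_k$ is a constant for $D$, and the displayed line is exactly the asserted representation $u=z_0+Rz_1+\ldots +R^{m}z_m$.

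It remains to discuss nonvanishing, i.e. the sharper reading $z_k\in Z_0(D)=ker\,D\setminus\{0\}$. Here I would use that $D^{m}u\in ker\,D$ (because $D^{m+1}u=0$) together with the fact that an initial operator acts as the identity on $ker\,D$ (immediate from $F^{2}=F$ and $im(F)=ker\,D$): thus $z_m=FD^{m}u=D^{m}u\neq 0$, the last step because $u\notin ker\,D^{m}$. The remaining coefficients $z_0,\ldots ,z_{m-1}$ genuinely need only lie in $Z(D)$ --- for instance $u=R^{m}z$ with $z\in Z_0(D)$ gives $z_0=\ldots =z_{m-1}=0$ --- so (\ref{uRz}) is to be understood with $z_k\in Z(D)$, those that happen to be nonzero then belonging to $Z_0(D)$.

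I do not expect a genuine obstacle: the whole content already sits inside the established Taylor formula, and the only points asking for a line of care are the observation that $u$ automatically lies in $dom(D^{m+1})$ and the fact that $F$ fixes the constants. As an alternative route I would record a direct induction on $m$: for $m=0$ take $z_0=u$; for $m\geq 1$ write $u=Fu+RDu$ via (\ref{FbyR}), check that $Du\in Z_{m-1}(D)$, apply the inductive hypothesis to get $Du=z_0'+Rz_1'+\ldots +R^{m-1}z_{m-1}'$ with $z_j'\in ker\,D$, and conclude $u=Fu+\sum_{k=1}^{m}R^{k}z_{k-1}'$, which has the required form with $z_0=Fu$ and $z_k=z_{k-1}'$.
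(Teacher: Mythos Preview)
Your proposal is correct and follows essentially the same route as the paper: apply the Taylor identity (\ref{xtaylor}) to $u$, drop the vanishing remainder $R^{m+1}D^{m+1}u$, and set $z_k=FD^{k}u$. Your additional remark that only $z_m$ is forced to be nonzero while $z_0,\ldots,z_{m-1}$ may vanish is a legitimate observation about a slight imprecision in the statement (the paper's own proof does not address it either), and your inductive alternative is a valid but unnecessary supplement.
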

\begin{proof}By formula (\ref{xtaylor}) we can write the
identity $u=\sum_{k=0}^{m}R^kFD^ku$ since $u\in Z_{m}(D)$ and
$R^{m+1}D^{m+1}u=0$. Then we define elements $z_k=FD^ku$,
$k=0,\ldots ,m\,$, which ends the proof.
\end{proof}

\begin{df}\label{monomials} Let $D\in{\cal R}^+(X)$ and $R\in{\cal R}_{D}$.
Then, any element $R^kz\in Z_{k}(D)$, for $z\in Z_0$, is said to
be an $R$-homogeneous $D$-polynomial (or $R$-monomial) of degree
$k\in{\mathbb N}_0$.
\end{df}
 Thus, any $D$-polynomial $u\in Z_{m}(D)$, of degree $deg\, u =
m$, is a sum of linearly independent $R$-homogeneous elements
$R^kz_k$, $k=0,\ldots ,m$. The linear space of all $D$-polynomials
is then
\begin{equation}\label{PolD}
P(D)=\bigcup\limits_{k=0}^{\infty}ker\, D^k
\end{equation}
whereas
\begin{equation}\label{PolDn}
P_n(D)=\bigcup\limits_{k=0}^{n+1}ker\, D^k=ker\, D^{n+1}
\end{equation}
 is the linear space of all $D$-polynomials of degree at most
$n\in{\mathbb N}_0$.\newline
Let us fix a basis
\begin{equation}\label{zetabasis}
\{\zeta_s\}_{s\in S}\subset Z(D)\; ,
\end{equation}
  of the linear space $Z(D)$, $D\in{\cal R}^+(X)$, and define
\begin{equation}
Z^s(D)=Lin\{\zeta_s\}\, ,
\end{equation}
for $s\in S$. Then
\begin{equation}\label{ZDdirect}
Z(D)=\bigoplus_{s\in S}Z^s(D)\, .
\end{equation}
\begin{p}\label{polbasis}
For an arbitrary right inverse $R\in{\cal R}_{D}$, the family
$\{R^m\zeta_s\colon {s\in S}, m\in{\mathbb N}_0\}$ is the basis of
the linear space $P(D)$. Naturally,  $\{R^m\zeta_s\colon {s\in S},
m=0,1,\ldots ,n\}$ forms the basis of the linear space $P_n(D)$, for
$n\in{\mathbb N}_0$.
\end{p}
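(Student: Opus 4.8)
The plan is to verify the spanning property and linear independence of the family $\{R^m\zeta_s\colon s\in S,\ m\in\mathbb N_0\}$ separately, and then read off the statement about $P_n(D)$ as a truncation of the statement about $P(D)$. Throughout, the two facts I would lean on are Proposition~\ref{polsumRz} (every $D$-polynomial of degree $m$ is of the form $z_0+Rz_1+\cdots+R^mz_m$ with $z_k\in Z_0(D)$) and the observation, already used in the proof of Proposition~\ref{nonemptyZm}, that $R^k\zeta_s$ is well defined and belongs to $Z_k(D)$, together with the iterated identity $D^jR^k=R^{\,k-j}$ (on the relevant elements) obtained by repeatedly applying $DR=I$.

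For spanning: let $u\in P(D)$. If $u=0$ there is nothing to do, so assume $u\neq0$; since $\ker D^0=\{0\}$ and $P(D)=\bigcup_k\ker D^k$, there is a (unique) $m\in\mathbb N_0$ with $u\in Z_m(D)$. By Proposition~\ref{polsumRz} write $u=\sum_{k=0}^mR^kz_k$ with $z_k\in Z_0(D)\subseteq Z(D)$, and expand each $z_k$ in the fixed basis, $z_k=\sum_{s\in S}c_{k,s}\zeta_s$ (a finite sum). Then $u=\sum_{k=0}^m\sum_{s\in S}c_{k,s}R^k\zeta_s$ is a finite combination of the proposed basis elements; since conversely every $R^m\zeta_s$ lies in $Z_m(D)\subseteq P(D)$, we conclude that the family spans $P(D)$.

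For linear independence: suppose $\sum_{k=0}^n\sum_{s\in S}c_{k,s}R^k\zeta_s=0$ (we may use a common upper index $n$). Each $R^k\zeta_s\in\ker D^{k+1}$, so applying $D^n$ kills every term with $k<n$, while $D^nR^n\zeta_s=\zeta_s$; hence $\sum_{s\in S}c_{n,s}\zeta_s=0$, and since $\{\zeta_s\}_{s\in S}$ is a basis of $Z(D)$ we get $c_{n,s}=0$ for all $s$. Dropping the top layer and applying $D^{n-1}$ to the remaining relation gives $c_{n-1,s}=0$ for all $s$, and iterating downward $n+1$ times forces all coefficients to vanish. (Alternatively, the layers $R^k\big(\sum_s c_{k,s}\zeta_s\big)\in Z_k(D)$ are linearly independent by Proposition~\ref{linindep}, which gives the same conclusion.) Together with the spanning step this proves that $\{R^m\zeta_s\colon s\in S,\ m\in\mathbb N_0\}$ is a basis of $P(D)$. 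For $P_n(D)=\ker D^{n+1}$ note that every nonzero element lies in some $Z_m(D)$ with $0\le m\le n$, so the spanning argument uses only the $R^m\zeta_s$ with $m\le n$, and linear independence of this subfamily is inherited from that of the whole family; hence it is a basis of $P_n(D)$.

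The only real point requiring care — and what I would flag as the main (if modest) obstacle — is the domain/image bookkeeping behind the identities $D^jR^k\zeta_s=R^{\,k-j}\zeta_s$ and $R^k\zeta_s\in Z_k(D)$: one must check that the iterated relation $DR=I$ may legitimately be applied to each $\zeta_s$ and its successive $R$-images, and that the finite combination indeed lies in $\operatorname{dom}(D^n)$. This involves no new ideas beyond what is implicit in Proposition~\ref{nonemptyZm}, only attention to $\operatorname{dom}$ and $\operatorname{im}$; once it is in place, both the spanning and the independence arguments are routine.
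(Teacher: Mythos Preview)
Your proposal is correct and follows essentially the same approach as the paper's proof: both obtain spanning from Proposition~\ref{polsumRz} followed by expanding each $z_k$ in the basis $\{\zeta_s\}$, and both establish linear independence by iteratively applying the highest relevant power of $D$ to strip off the top layer of coefficients via $D^kR^k\zeta_s=\zeta_s$. The only differences are cosmetic---the paper treats independence first and uses the slightly more general indexing $m_1<\cdots<m_k$ rather than $0,\ldots,n$---and your explicit remark on domain bookkeeping is a reasonable addition that the paper leaves implicit.
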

\begin{proof}Let $u=\sum_{i=1}^{k}\sum_{s\in S_{i}}a_{is} R^{m_i}\zeta_{s}$,
 $m_1<\ldots <m_k$ and  $S_i\subset S$ be finite subsets for
$i=1,\ldots ,k$. Assume $u=0$ and calculate $D^{m_k}u=\sum_{s\in
S_{k}}a_{ks}\zeta_{s}=0$, which implies $a_{ks}=0$, for all $s\in
S_k$. Hence $u=\sum_{i=1}^{k-1}\sum_{s\in S_{i}}a_{is}
R^{m_i}\zeta_{s}=0$ and analogously we get $D^{m_{k-1}}u=\sum_{s\in
S_{k-1}}a_{(k-1)s}\zeta_{s}=0$, which implies $a_{(k-1)s}=0$, for
all $s\in S_{k-1}$. Similarly we prove that $a_{is}=0$, for all
$s\in S_{i}$, $i=k-2,\ldots ,1$. Now, let $u\in P(D)$ be a
polynomial of degree $deg\,u=n \in{\mathbb N}_0\}$. Then, on the
strength of Proposition \ref{polsumRz}, we can write
$u=\sum_{k=0}^{n}R^kz_k $, for some elements $z_0,\ldots ,z_n\in
Z(D)$. In turn, each element $z_k$ can be expressed as a linear
combination $z_k=\sum_{s\in S_k}a_{ks}\zeta_s$, for some finite
subset of indices $S_k\subset S$. Hence we obtain
$u=\sum_{k=0}^{n}\sum\limits_{s\in S_k}a_{ks}R^k\zeta_s\,$.
\end{proof}

With a right inverse $R\in{\cal R}_{D}$, $s\in S$ and $n\in{\mathbb
N}_0$, we shall associate the linearly independent family
$\{R^m\zeta_s\colon m\in\{0,\ldots ,n\}\}$ forming a basis of the
linear space of $s$-homogeneous $D$-polynomials
\begin{equation}\label{Vsndirect}
V_{s}^{n}(D)=Lin\{R^m\zeta_s\colon m\in\{0,\ldots ,n\}\}\, ,
\end{equation}
(independent of the choice of $R$) of dimension
\begin{equation}\label{dimVsn}
dim\,V_{s}^{n}(D)=n+1\, ,
\end{equation}
being a linear subspace of $P_n(D)$. Then, on the strength of
Proposition \ref{polbasis}, the linear space $P_n(D)$ is a direct
sum
\begin{equation}\label{Pndirect}
P_n(D)=\bigoplus_{s\in S}V_s^n(D)\, .
\end{equation}
\begin{corol}If $dim\, Z(D)<\infty$, the following formula holds
\begin{equation}\label{dimpoln}
dim\, P_n(D)=(n+1)\cdot dim\, Z(D)\, ,
\end{equation}
for any $n\in{\mathbb N}_0$.
\end{corol}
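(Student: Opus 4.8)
The plan is to read the result straight off the direct sum decomposition established just above. First I would note that the hypothesis $dim\, Z(D)<\infty$ forces the index set $S$ appearing in the chosen basis (\ref{zetabasis}) of $Z(D)$ to be finite, with cardinality $|S|=dim\, Z(D)$, since the cardinality of a basis of a finite-dimensional space is its dimension. Everything else has already been done in Proposition \ref{polbasis} and in the dimension count (\ref{dimVsn}); the corollary is merely the bookkeeping step that combines them once finiteness of $S$ is in hand.

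Next I would invoke (\ref{Pndirect}), namely $P_n(D)=\bigoplus_{s\in S}V_s^n(D)$, which under our hypothesis is a finite direct sum of linear subspaces. The dimension of a finite direct sum of linear spaces is the sum of the dimensions of the summands, and each summand satisfies $dim\, V_s^n(D)=n+1$ by (\ref{dimVsn}); therefore
\[
dim\, P_n(D)=\sum_{s\in S}dim\, V_s^n(D)=\sum_{s\in S}(n+1)=|S|\cdot (n+1)=(n+1)\cdot dim\, Z(D).
\]

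An equivalent and perhaps even more direct route avoids the direct-sum language entirely and counts basis vectors: Proposition \ref{polbasis} says that $\{R^m\zeta_s\colon s\in S,\ m=0,1,\ldots ,n\}$ is a basis of $P_n(D)$. When $S$ is finite this basis is finite, and the indexing pairs $(s,m)$ with $s\in S$ and $m\in\{0,\ldots ,n\}$ number exactly $(n+1)\cdot|S|$; by the linear independence half of Proposition \ref{polbasis} distinct pairs give distinct (indeed linearly independent) vectors, so there is no collapse and $dim\, P_n(D)=(n+1)\cdot|S|=(n+1)\cdot dim\, Z(D)$.

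I do not anticipate any real obstacle. The only point requiring the slightest attention is the observation in the previous paragraph — that the map $(s,m)\mapsto R^m\zeta_s$ is injective on the finite index set, so that the basis really has $(n+1)\cdot dim\, Z(D)$ elements rather than fewer — and this is already guaranteed by Proposition \ref{polbasis}. Hence the proof is essentially a one-line consequence of results proved above.
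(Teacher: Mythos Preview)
Your proposal is correct and matches the paper's intended reasoning: the corollary is stated without proof precisely because it is an immediate consequence of Proposition~\ref{polbasis} together with (\ref{dimVsn}) and (\ref{Pndirect}), exactly as you describe. Either of your two routes---summing dimensions over the finite direct sum or counting the $(n+1)\cdot|S|$ basis vectors---is the one-line justification the paper omits.
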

Naturally, one can extend formula (\ref{Vsndirect}) and define
\begin{equation}\label{Vdirect}
V_{s}(D)=Lin\{R^m\zeta_s\colon m\in{\mathbb N}_0\}\, ,
\end{equation}
which is both $D$- and $R$-invariant subspace of $P(D)$, i.e.
\begin{equation}\label{DinvariantVsn}
DV_{s}(D)\equiv\{Du\colon u\in V_{s}(R)\}\subset V_{s}(D)\, ,
\end{equation}
\begin{equation}\label{RinvariantVsn}
RV_{s}(D)\equiv\{Ru\colon u\in V_{s}(D)\}\subset V_{s}(D)\, .
\end{equation}
Thus,  $P(D)$ turns out to be simultaneously $D$- and $R$-invariant
linear subspace of $X$, since it can be decomposed as the following
direct sum
\begin{equation}\label{PDVsdirect}
P(D)=\bigoplus_{s\in S}V_s(D)\, .
\end{equation}
 Since $P(D)$ is a linear subspace of $X$, there exists
(not uniquely) another linear subspace $Q(D)$ of $X$ such that
\begin{equation}\label{PsumQ}
X=P(D)\oplus Q(D)\, .
\end{equation}
Then, every linear mapping $\phi\colon X\rightarrow X$ can be
decomposed as the direct sum
\begin{equation}\label{phiPQ}
\phi=\phi_{_P}\oplus\phi_{_Q}\, ,
\end{equation}
of two restrictions $\phi_{_P}=\phi_{|P(D)}$ and
$\phi_{_Q}=\phi_{|Q(D)}$, i.e. for any $x'\in P(D)$ and $x''\in
Q(D)$ there is $\phi (x'+x'')=\phi_{_P}(x')+\phi_{_Q}(x'')$. In
particular, the mappings  $D\in{\cal R}^+(X)$, $R\in{\cal R}_{D}$
can be decomposed as direct sums $D=D_{_P}\oplus D_{_Q}$,
$R=R_{_P}\oplus R_{_Q}$ such that
\begin{equation}
I=DR=D_{_P}R_{_P}\oplus D_{_Q}R_{_Q}=I_{_P}\oplus I_{_Q}\, ,
\end{equation}
\begin{equation}
RD=R_{_P}D_{_P}\oplus R_{_Q}D_{_Q}\, ,
\end{equation}
which allows for the decomposition of the initial operator $F$
corresponding to $R$
$$
F=I-RD=I_{_P}\oplus I_{_Q}-R_{_P}D_{_P}\oplus R_{_Q}D_{_Q}=
$$
\begin{equation}
=(I_{_P}-R_{_P}D_{_P})\oplus (I_{_Q}-R_{_Q}D_{_Q})=F_{_P}\oplus
F_{_Q}.
\end{equation}
\begin{p}
Let $D\in{\cal R}^+(X)$, $R',R''\in{\cal R}_{D}$ be any right
inverses and $F',F''\in{\cal F}_{D}$ be the initial operators
corresponding to $R'$ and $R''$, respectively. Then
$R:=R'_{_P}\oplus R''_{_Q}\in{\cal R}_{D}$ and $F:=F'_{_P}\oplus
F''_{_Q}\in{\cal F}_{D}$ corresponds to $R$.
\end{p}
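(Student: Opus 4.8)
The plan is to verify the two defining properties in turn: that $R:=R'_{_P}\oplus R''_{_Q}$ lies in ${\cal R}_{D}$ (Definition \ref{Rinverse}), and that $F:=F'_{_P}\oplus F''_{_Q}$ is an initial operator induced by $D$ that corresponds to $R$ -- the latter being, by (\ref{FbyR}), the single operator identity $F=I-RD$. The tools are the block decompositions $X=P(D)\oplus Q(D)$, $D=D_{_P}\oplus D_{_Q}$, $R'=R'_{_P}\oplus R'_{_Q}$, $R''=R''_{_P}\oplus R''_{_Q}$ set up just before the statement, together with the $D$-invariance and right-inverse-invariance of $P(D)$ (Proposition \ref{polbasis}, (\ref{DinvariantVsn}), (\ref{RinvariantVsn})) and the inclusion $Z(D)\subseteq P(D)$.

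First I would check that $R\in{\cal R}_{D}$. Since $P(D)\subseteq im\,D$, the splitting of $X$ restricts to a splitting of $im\,D=dom(R')=dom(R'')$, whence $dom(R)=im\,D$; and for $y=y'+y''\in im\,D$ with $y'\in P(D)$ and $y''\in Q(D)$, linearity of $D$ together with $DR'=DR''=I$ gives $DRy=DR'y'+DR''y''=y'+y''=y$, so $DR=I$ on $im\,D$ and $R$ is a right inverse of $D$.

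Next comes the key point, the block form $RD=R'_{_P}D_{_P}\oplus R''_{_Q}D_{_Q}$. On $P(D)$ it is immediate, since $Du\in P(D)$ for $u\in P(D)$ and $R$ acts as $R'_{_P}$ there; on $Q(D)$ it reads $RDu=R''_{_Q}D_{_Q}u$. Since $F'=I-R'D$ restricts on $P(D)$ to $F'_{_P}=I_{_P}-R'_{_P}D_{_P}$ and $F''=I-R''D$ restricts on $Q(D)$ to $F''_{_Q}=I_{_Q}-R''_{_Q}D_{_Q}$, subtracting from $I=I_{_P}\oplus I_{_Q}$ yields
\[
I-RD=(I_{_P}-R'_{_P}D_{_P})\oplus(I_{_Q}-R''_{_Q}D_{_Q})=F'_{_P}\oplus F''_{_Q}=F .
\]
As $R\in{\cal R}_{D}$, the identity $F=I-RD$ shows simultaneously that $F$ is an initial operator induced by $D$ and that it corresponds to $R$; one may also verify $F^2=F$ and $im\,F=Z(D)$ directly, using $Z(D)\subseteq P(D)$ and the fact that $F'$ and $F''$ restrict to the identity on their common image $Z(D)$.

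The genuine obstacle, I expect, is justifying that $Du\in Q(D)$ whenever $u\in Q(D)$, i.e.\ the legitimacy of treating $D=D_{_P}\oplus D_{_Q}$ as a block-diagonal decomposition at all. This requires the complement $Q(D)$ to be $D$-compatible ($D$ mapping $dom(D)\cap Q(D)$ into $Q(D)$); for an arbitrary algebraic complement it may fail, and then the mixed operators $R'_{_P}\oplus R''_{_Q}$ and $F'_{_P}\oplus F''_{_Q}$ no longer interlock -- the $Q$-part of $Du$ would drag a nonzero $P$-part through $R$, producing cross terms in $FR$ that need not cancel, since $F'$ and $F''$ generally disagree on $P(D)$. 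So the crux is to work from the outset with a $D$-invariant complement $Q(D)$ -- which is exactly what the block notation preceding the statement presupposes -- and to carry the corresponding invariance through $R'$, $R''$, $F'$, $F''$; with that in hand, the remainder is the routine bookkeeping above.
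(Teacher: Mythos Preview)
Your proposal is correct and follows essentially the same route as the paper: verify $DR=I$ blockwise, then compute $RD=R'_{_P}D_{_P}\oplus R''_{_Q}D_{_Q}$, and subtract from $I=I_{_P}\oplus I_{_Q}$ to obtain $F=I-RD$. Your caveat about the $D$-invariance of $Q(D)$ is well taken --- the paper simply presupposes it through the formal block decomposition $D=D_{_P}\oplus D_{_Q}$ introduced just before the statement, and under that standing hypothesis your argument and the paper's coincide.
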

\begin{proof}
$$DR=(D_{_P}\oplus D_{_Q})(R'_{_P}\oplus
R''_{_Q})=D_{_P}R'_{_P}\oplus D_{_Q}R''_{_Q}=I_{_P}\oplus I_{_Q}=I\,
,$$
$$RD=(R'_{_P}\oplus R''_{_Q})(D_{_P}\oplus D_{_Q})=R'_{_P}D_{_P}\oplus R''_{_Q}D_{_Q}\,
,$$

$$F=F'_{_P}\oplus F''_{_Q}=(I_{_P}-R'_{_P}D_{_P})\oplus
(I_{_Q}-R''_{_Q}D_{_Q})=
$$
$$=
I_{_P}\oplus I_{_Q}-R'_{_P}D_{_P}\oplus R''_{_Q}D_{_Q}=I-RD\, .
$$
\end{proof}
The last results allow one to combine right inverses and initial
operators as direct sums of independent components.
\section{$\cal D$-polynomials}
Originally, the main concepts of algebraic analysis have been defined  for a single right invertible operator $D\in{\cal R}^{+}(X)$. Such an approach one can recognize as an algebraic generalization of mathematical analysis dedicated to functions of one variable.
In this section we propose to replace a single $D\in{\cal R}^{+}(X)$ by a nonempty family ${\cal D}\subset{\cal R}^{+}(X)$ and extend the notion of a polynomial in order to relate it with mathematical analysis of many variables. Since mathematical analysis of many variables forms a groundwork for differential geometry, there is a hope that the corresponding geometric ideas can also be studied in terms of algebraic analysis. 

By ${\cal R}_D$ and ${\cal F}_D$ we shall denote the families of all right inverses and all initial operators defined for a single $D\in{\cal R}^{+}(X)$, correspondingly. This we understand as $R: X\rightarrow dom\, D$, $DR=I$, and $F: X\rightarrow ker\, D$, $F^2=F$, $im\, F=ker\, D$,  for any $R\in{\cal R}_D$ and $F\in{\cal F}_D$. 
If the additional condition $FR=0$ is fulfilled, we say that $F$ corresponds to $R$. 
\begin{df}\label{Dconstants}
For any family ${\cal D}\subset{\cal L}(X)$ we define
\begin{equation}\label{ZcalD}
Z({\cal D})=\bigcap\limits_{ D \in {\cal D}}ker D,
\end{equation}
which is called the family of $\cal D$-constants.
In the trivial case, i.e. when ${\cal D}=\emptyset$, we assume 
\begin{equation}\label{ZforemptyD}
Z({\emptyset})=X\, .
\end{equation}
\end{df}
Obviously, for any two families ${\cal D'},{\cal D''} \subset{\cal L}(X)$, 
\begin{equation}\label{ZD'capD''}
Z({\cal D'}\cup {\cal D''})=Z({\cal D'})\cap Z({\cal D''}).
\end{equation}
On the strength of formula (\ref{ZD'capD''}) we notice that 
\begin{equation}\label{ZDinclusion}
{\cal D'}\subset{\cal D''}\Rightarrow Z({\cal D'})\supset Z({\cal D''}).
\end{equation}
\begin{df}
A family $\cal D$ separates elements of $X$ if
for any two different elements $x_1,x_2\in X$ there exists $D\in\cal D$ such that $Dx_1\neq Dx_2$.
\end{df}
In particular, $\cal D$ separates elements of $X$ if there is an invertible element $D\in{\cal D}$. 
If a family $\cal D$ separates elements of $X$, then $Z({\cal D})$ becomes trivial, i.e. $Z({\cal D})=\{0\}$. 

In the sequel we shall assume ${\cal D}\subset {\cal R}(X)$ to be a  family of right invertible operators which does not separate elements of $X$, i.e. $Z({\cal D})\neq\{0\}$. In particular, from this assumption we notice that 
${\cal D}\neq\emptyset$ and any element $D\in\cal D$ is right invertible but not invertible, i.e. ${\cal D}\subset {\cal R}^+(X)$.
\begin{df}\label{initial}
An initial operator for  $\cal D$ is any map $F:X\rightarrow Z({\cal D})$, satisfying $F^2=F$ and $im\, F=Z({\cal D})$. By ${\cal F}({\cal D})$ we shall denote the family of all initial operators for $\cal D$.
\end{df}
To compare two notations, notice that  ${\cal F}(\{D\})={\cal F}_D$, when ${\cal D}=\{D\}$.\vspace{3mm}

The immediate consequence of the above definition is that nontrivial initial operators for $\cal D$ do exist only if 
\begin{equation}\label{nontrivial}
dim\, Z({\cal D}) >0.
\end{equation}
 Therefore, in the sequel we shall assume condition (\ref{nontrivial}) to be fulfiled.\\
\begin{df}
Let ${\cal D}\subset{\cal R}^{+}(X)$,  ${\cal D}\neq\emptyset$. We say that an initial operator $F\in{\cal F}({\cal D})$ corresponds with a family  $\{ R_D \}_{D\in{\cal D}}$ of right inverses  $R_D\in{\cal R}_D$, if $FR_D=0$, for any $D\in\cal D$.
\end{df}
\begin{p}
Let $D\in\cal D$ and $R\in{\cal R}_D$. Then, for any initial operator $F\in{\cal F}({\cal D})$, the operator $R-FR$ is also a right inverse of $D\,$.
\end{p}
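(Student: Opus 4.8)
\section*{Proof proposal}

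The plan is to check directly that $R-FR$ meets the two requirements that, by Definition~\ref{Rinverse}, characterize a right inverse of $D$: that its domain equals $im\, D$ and that $D(R-FR)=I\equiv id_{im\, D}$. The first is immediate bookkeeping: $R\in{\cal R}_D$ has $dom\,R=im\, D$ and takes its values in $X=dom\,F$, so $FR$ is well defined on $im\, D$, and $R-FR$ is a linear map between subspaces of $X$ with $dom(R-FR)=im\, D$. Hence $R-FR\in{\cal L}(X)$ and only the composition identity remains to be verified.

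The key observation is that the single operator $D$ annihilates $F$, even though $F$ is an initial operator built from the whole family $\cal D$. Indeed, since $D\in\cal D$, formula~(\ref{ZcalD}) gives
\[
im\, F=Z({\cal D})=\bigcap_{D'\in{\cal D}}ker\,D'\subseteq ker\,D,
\]
so $DFx=0$ for every $x\in dom\,F$; in particular $DFRy=0$ for all $y\in im\, D$, i.e. $DFR=0$ as operators on $im\, D$. Combining this with the defining property $DR=I$ of $R\in{\cal R}_D$ yields, on $im\, D$,
\[
D(R-FR)=DR-DFR=I-0=I,
\]
which is exactly the identity demanded by Definition~\ref{Rinverse}. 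Therefore $R-FR\in{\cal R}_D$.

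This is the family-analogue of the classical relation~(\ref{RbyF}); there is no genuine obstacle. The only point deserving attention is that the conclusion rests on the membership $D\in\cal D$, which forces $Z({\cal D})\subseteq ker\,D$ and hence $DF=0$ — a weaker fact than in the single-operator case, but still all that is needed. The remaining care is purely a matter of domains and codomains ($dom\,R=im\, D$, $dom\,F=X$, $im\, R\subseteq dom\,F$), ensuring every composition above is legitimate.
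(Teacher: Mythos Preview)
Your proof is correct and follows essentially the same approach as the paper's own proof, which is the one-line computation $D(R-FR)=DR-DFR=DR=I$. You simply supply the justification that $DFR=0$ (via $im\,F=Z({\cal D})\subseteq ker\,D$) and the domain bookkeeping, which the paper leaves implicit.
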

\begin{proof}
Indeed, $D(R-FR)=DR-DFR=DR=I$.
\end{proof}
In the particular case when ${\cal D}=\{D\}$, $D\in{\cal R}^{+}(X)$ and $F\in{\cal F}({\cal D})$,  the right inverse $R-FR\in{\cal R}_D$ is independent of $R\in{\cal R}_D$ (compare Ref. \cite{DPR}). However, if ${\cal D}\subset{\cal R}^{+}(X)$ consists of more than one element, the operator $R-FR\in{\cal R}_D$ depends on $R$, in general.
\begin{p} For any family $\{ R_D \}_{D\in{\cal D}}$, $R_D\in{\cal R}_D$,  an initial operator $F\in{\cal F}({\cal D})$ corresponds with the family $\{R_D-FR_D\}_{D\in\cal D}$. 
\end{p}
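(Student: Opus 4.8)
The plan is to verify directly the two conditions that together constitute the assertion. First, for each $D\in{\cal D}$ one must check that $R_D-FR_D$ is actually a right inverse of $D$; this, however, is exactly the content of the preceding proposition, so it can simply be invoked. It rests on the observation that $im\,F=Z({\cal D})\subset ker\,D$ for every $D\in{\cal D}$, whence $DF=0$ and therefore $D(R_D-FR_D)=DR_D-DFR_D=DR_D=I$, so indeed $R_D-FR_D\in{\cal R}_D$.

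Second --- and this is the only genuinely new computation --- one has to show that $F$ corresponds with the family $\{R_D-FR_D\}_{D\in{\cal D}}$ in the sense of the relevant definition, i.e.\ that $F(R_D-FR_D)=0$ for every $D\in{\cal D}$. Here I would just expand the composition using linearity of $F$: $F(R_D-FR_D)=FR_D-F(FR_D)=FR_D-F^2R_D$. Since $F\in{\cal F}({\cal D})$ is by definition idempotent, $F^2=F$, hence $F^2R_D=FR_D$ and the right-hand side vanishes. This closes the argument.

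The domains cause no trouble: $R_D\colon X\rightarrow dom\,D$ and $F\colon X\rightarrow Z({\cal D})\subset X$, so every composition written above is defined on the whole of $X$, and in particular $F$ may legitimately be applied to $FR_Dx$ for each $x\in X$. There is, in effect, no hard step here --- the statement is an immediate consequence of the idempotency of $F$ together with the previous proposition --- so the only point requiring care is to keep the two halves of the claim (being a right inverse of $D$; corresponding to $F$) cleanly separated and to cite the appropriate earlier result for each.
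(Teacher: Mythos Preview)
Your proof is correct and follows exactly the paper's approach: the paper's proof is the single line $F(R_D-FR_D)=FR_D-F^2R_D=FR_D-FR_D=0$, relying implicitly on the preceding proposition for the fact that $R_D-FR_D\in{\cal R}_D$. You have merely made that reliance explicit and added a remark on domains.
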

\begin{proof}
Indeed, $F(R_D-FR_D)=FR_D-F^2R_D=FR_D-FR_D=0$, for any $D\in{\cal D}$.
\end{proof}
\begin{df}\label{PnD}
The linear subspace of $X$ defined as
\begin{equation}\label{PncalD}
P_n({\cal D})=\bigcap\limits_{ (D_0,\ldots ,D_{n}) \in {\cal D}^{n+1}}ker(D_0\ldots D_{n}),
\end{equation}
for any $n\in{\mathbb N}_0$,  is called the space of $\cal D$-polynomials of degree at most $n$.  For the sake of convenience we assume 
\begin{equation}\label{Pnempty}
P_n(\emptyset)=X,
\end{equation}
for any $n\in{\mathbb N}_0$, in case when ${\cal D}=\emptyset$,.
\end{df}
One can easily notice that 
\begin{equation}\label{Pninclusion}
 Z({\cal D})=P_0({\cal D})\subset P_1({\cal D})\subset P_2({\cal D})\subset\ldots.
\end{equation}
 Thus, we can build the linear subspace of $X$
\begin{equation}
P({\cal D})=\bigcup\limits_{n\in{{\mathbb N}_0}}P_n({\cal D}),
\end{equation}
called  the linear space of all $\cal D$-polynomials.

Let us notice that formula (\ref{ZD'capD''}) can be generalized as follows
\begin{equation}\label{PnD'capD''}
P_n({\cal D'}\cup{\cal D''})\subset P_n({\cal D'})\cap P_n({\cal D''}),
\end{equation}
for any $n\in{\mathbb N}_0$.
Consequently, for any two nonempty families ${\cal D'},{\cal D''} \subset{\cal R^{+}}(X)$, we get  the generalization of formula (\ref{ZDinclusion}), namely  
\begin{equation}\label{PnDinclusion}
{\cal D'}\subset{\cal D''}\Rightarrow P_n({\cal D'})\supset P_n({\cal D''}).
\end{equation}
 If a family $\cal D$ separates elements of $X$, we obtain $ Z({\cal D})=P_0({\cal D})= P_1({\cal D})=P_2({\cal D})=\ldots =\{0\}$. Therefore, we shall be interested with families $\cal D$ which do not separate elements of $X$.
\begin{df}\label{ZnDdefinition}
Define the families $Z_0({\cal D})=P_0({\cal D})\setminus \{0\}$ and
\begin{equation}
Z_n({\cal D})=P_n({\cal D})\setminus P_{n-1}({\cal D}),
\end{equation}
for any $n\in\mathbb N$. Then, any element $x\in Z_n({\cal D})$ is said to be a $\cal D$-polynomial of degree $deg(x)=n\in{\mathbb N}_0$. By definition, we shall assume  $deg(0)=-\infty$.
\end{df}
Evidently, in case when $\cal D$ separates elements of $X$, there are no nontrivial $\cal D$-polynomials, i.e. $ Z_n({\cal D})=\emptyset$, for any $n\in{\mathbb N}_0$. On the strength of formula (\ref{Pnempty}) and Definition ({\ref{ZnDdefinition}) we can notice that 
\begin{equation}
Z_n(\emptyset )=\left\{
\begin{array}{ccl}
   X\setminus\{0\}& \mbox{if} &n=0\\
 \emptyset& \mbox{if} &n>0\,  , 
\end{array}\right.
\end{equation}
i.e. all nonzero elements of $X$ are considered to be of degree $0$, if ${\cal D}=\emptyset$. 
\begin {df}\label{effectiveD}
Let ${\cal D}\subset{\cal R}^+(X)$ be a family which does not separate elements of $X$. Then,  ${\cal D}$ is said to be effective if there extists a family, so-called $\cal D$-proper family, $\{R_D\}_{D\in{\cal D}}$  of right inverses $R_D\in{\cal R}_D$, $D\in\cal D$, such that for any ${\cal D'}\subset{\cal D}$, $n\in{\mathbb N}_0$,
\begin{equation}\label{D-eff-formula}
R_{D}(Z_n({\cal D}\setminus{\cal D'}))\subset\left\{
\begin{array}{ccl}
    Z_{n+1}({\cal D}\setminus{\cal D'})&\mbox{if} &D\in{\cal D}\setminus{\cal D'}\\
    Z_{n}({\cal D}\setminus{\cal D'})& \mbox{if} &D\in{\cal D'}\; .
\end{array}\right.
\end{equation}

By definition we assume that the empty family $\cal D=\emptyset$ is effective.
\end{df}
\begin{p}
Any subfamily ${\cal D}_1\subset\cal D$ of an effective family $\cal D$ is effective as well.
\end{p}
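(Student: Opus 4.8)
The plan is to show that the restriction to ${\cal D}_1$ of a ${\cal D}$-proper family of right inverses is itself ${\cal D}_1$-proper. First I would dispose of the trivial case ${\cal D}_1=\emptyset$, which is effective by the concluding convention of Definition \ref{effectiveD}. So assume ${\cal D}_1\neq\emptyset$. Before speaking of effectiveness of ${\cal D}_1$ one has to check that it satisfies the standing hypotheses: ${\cal D}_1\subset{\cal D}\subset{\cal R}^+(X)$ is immediate, and since ${\cal D}_1\subset{\cal D}$ formula (\ref{ZDinclusion}) gives $Z({\cal D}_1)\supset Z({\cal D})$; as ${\cal D}$ does not separate elements of $X$, we have $Z({\cal D})\neq\{0\}$, hence $Z({\cal D}_1)\neq\{0\}$ and ${\cal D}_1$ does not separate elements of $X$ either.

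Next, let $\{R_D\}_{D\in{\cal D}}$ be a ${\cal D}$-proper family witnessing the effectiveness of ${\cal D}$, and consider its restriction $\{R_D\}_{D\in{\cal D}_1}$, which is again a family of right inverses $R_D\in{\cal R}_D$, $D\in{\cal D}_1$. I claim it is ${\cal D}_1$-proper. Fix ${\cal D}'_1\subset{\cal D}_1$ and $n\in{\mathbb N}_0$; we must verify (\ref{D-eff-formula}) with ${\cal D}_1$, ${\cal D}'_1$ in place of ${\cal D}$, ${\cal D}'$. The key observation is to choose the auxiliary subfamily
\[
{\cal D}':=({\cal D}\setminus{\cal D}_1)\cup{\cal D}'_1\subset{\cal D},
\]
for which a direct computation with complements yields ${\cal D}\setminus{\cal D}'={\cal D}_1\setminus{\cal D}'_1$, while obviously ${\cal D}'_1\subset{\cal D}'$.

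With this ${\cal D}'$, apply (\ref{D-eff-formula}) for the effective family ${\cal D}$. If $D\in{\cal D}_1\setminus{\cal D}'_1$, then $D\in{\cal D}\setminus{\cal D}'$, so the first alternative gives $R_D\bigl(Z_n({\cal D}\setminus{\cal D}')\bigr)\subset Z_{n+1}({\cal D}\setminus{\cal D}')$, which upon substituting ${\cal D}\setminus{\cal D}'={\cal D}_1\setminus{\cal D}'_1$ reads exactly $R_D\bigl(Z_n({\cal D}_1\setminus{\cal D}'_1)\bigr)\subset Z_{n+1}({\cal D}_1\setminus{\cal D}'_1)$. If $D\in{\cal D}'_1$, then $D\in{\cal D}'$, so the second alternative gives $R_D\bigl(Z_n({\cal D}\setminus{\cal D}')\bigr)\subset Z_n({\cal D}\setminus{\cal D}')$, i.e. $R_D\bigl(Z_n({\cal D}_1\setminus{\cal D}'_1)\bigr)\subset Z_n({\cal D}_1\setminus{\cal D}'_1)$. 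These are precisely the two cases of (\ref{D-eff-formula}) for ${\cal D}_1$, so $\{R_D\}_{D\in{\cal D}_1}$ is ${\cal D}_1$-proper and ${\cal D}_1$ is effective.

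I do not expect a genuine obstacle here; the argument is essentially set-theoretic bookkeeping. The only point requiring a moment's care is the choice of the matching subfamily ${\cal D}'$: one needs ${\cal D}\setminus{\cal D}'$ to coincide with ${\cal D}_1\setminus{\cal D}'_1$ (so that the spaces $Z_n$ and $Z_{n+1}$ occurring on both sides agree) and at the same time ${\cal D}'_1\subset{\cal D}'$ (so that the second alternative of (\ref{D-eff-formula}) applies to every $D\in{\cal D}'_1$, even though ${\cal D}'$ may be strictly larger than ${\cal D}'_1$); the complement ${\cal D}'=({\cal D}\setminus{\cal D}_1)\cup{\cal D}'_1$ accomplishes both at once.
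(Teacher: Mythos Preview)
Your proof is correct and follows essentially the same route as the paper: handle the empty case by convention, note that ${\cal D}_1$ does not separate elements of $X$, restrict the ${\cal D}$-proper family to ${\cal D}_1$, and for a given ${\cal D}'_1\subset{\cal D}_1$ set ${\cal D}'=({\cal D}\setminus{\cal D}_1)\cup{\cal D}'_1$ so that ${\cal D}\setminus{\cal D}'={\cal D}_1\setminus{\cal D}'_1$, then read off (\ref{D-eff-formula}). Your write-up is in fact slightly more detailed than the paper's, spelling out both cases $D\in{\cal D}_1\setminus{\cal D}'_1$ and $D\in{\cal D}'_1$ explicitly.
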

\begin{proof}By definition, the empty subfamily $\emptyset ={\cal D}_1\subset\cal D$ is effective.
 Now, let 
$\emptyset\neq{\cal D}_1\subset\cal D$ and ${\cal D'}_1\subset{\cal D}_1$ be an arbitrary subfamily. Since an effective family $\cal D$ does not separate elements of $X$, the more so does not its subfamily ${\cal D}_1$. With subfamilies ${\cal D'}_1, {\cal D}_1$ we can associate the family ${\cal D'}=({\cal D}\setminus{\cal D}_1)\cup{\cal D'}_1$.  Then, one can easily verify that ${\cal D}\setminus{\cal D'}={\cal D}_1\setminus{\cal D'}_1$ and by formula (\ref{D-eff-formula}), for any $D\in{\cal D}_1$, we obtain
\begin{equation}\label{D_1-eff-formula}
R_{D}(Z_n({\cal D}_1\setminus{\cal D'}_1))\subset\left\{
\begin{array}{ccl}
    Z_{n+1}({\cal D}_1\setminus{\cal D'}_1)&\mbox{if} &D\in{\cal D}_1\setminus{\cal D'}_1\\
    Z_{n}({\cal D}_1\setminus{\cal D'}_1)& \mbox{if} &D\in{\cal D'}_1\; .
\end{array}\right.
\end{equation}
\end{proof}
\begin{p}
If ${\cal D}\subset{\cal R}^+(X)$ is a nonempty effective family which does not separate points of $X$, then $Z_n({\cal D})\neq\emptyset$, for any $n\in{\mathbb N}_0$.
\end{p}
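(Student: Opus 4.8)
The plan is to imitate the proof of Proposition \ref{nonemptyZm}, the single-operator version of this statement, and to exploit effectiveness with the degenerate choice ${\cal D}'=\emptyset$ in formula (\ref{D-eff-formula}). First I would settle the base case $n=0$: by Definition \ref{ZnDdefinition}, $Z_0({\cal D})=P_0({\cal D})\setminus\{0\}=Z({\cal D})\setminus\{0\}$, and since ${\cal D}$ does not separate elements of $X$ we have $Z({\cal D})\neq\{0\}$, so there exists a nonzero $z_0\in Z_0({\cal D})$.

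For $n\ge 1$ I would invoke effectiveness: let $\{R_D\}_{D\in{\cal D}}$ be a ${\cal D}$-proper family of right inverses, and, since ${\cal D}\neq\emptyset$, fix some $D\in{\cal D}$. Taking ${\cal D}'=\emptyset$ in (\ref{D-eff-formula}) is legitimate because $\emptyset\subset{\cal D}$; then ${\cal D}\setminus{\cal D}'={\cal D}$, so every operator of ${\cal D}$ --- in particular the chosen $D$ --- falls into the first branch and
$$R_D\big(Z_n({\cal D})\big)\subset Z_{n+1}({\cal D}),\qquad n\in{\mathbb N}_0.$$
Applying this relation repeatedly to $z_0\in Z_0({\cal D})$ yields $R_D^{\,n}z_0\in Z_n({\cal D})$ for every $n$, which proves $Z_n({\cal D})\neq\emptyset$.

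The argument carries essentially no difficulty; the only point that needs a word of care is that the degenerate choice ${\cal D}'=\emptyset$ is admissible in Definition \ref{effectiveD}, so that the first line of (\ref{D-eff-formula}) genuinely applies to every $D\in{\cal D}$. All the real work has been front-loaded into the hypothesis that ${\cal D}$ is effective, which is exactly what makes the iteration $z_0,\,R_Dz_0,\,R_D^2z_0,\dots$ climb through the strata $Z_0({\cal D}),Z_1({\cal D}),Z_2({\cal D}),\dots$ without ever collapsing.
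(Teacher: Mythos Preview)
Your proposal is correct and follows essentially the same route as the paper's proof: both establish $Z_0({\cal D})\neq\emptyset$ from non-separation, then take ${\cal D}'=\emptyset$ in Definition~\ref{effectiveD} so that the first branch of (\ref{D-eff-formula}) yields $R_D(Z_n({\cal D}))\subset Z_{n+1}({\cal D})$, and iterate from a nonzero $z_0\in Z_0({\cal D})$. The paper writes the iteration as the chain $\emptyset\neq R_D^{\,n}Z_0({\cal D})\subset R_D^{\,n-1}Z_1({\cal D})\subset\cdots\subset Z_n({\cal D})$, but this is the same argument you give.
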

\begin{proof}Since $\cal D$ does not separate points of $X$, we get $Z_0({\cal D})\neq\emptyset$.  Now, on the strength of Definition (\ref{effectiveD}), we can take  an arbitrary $D\in\cal D$,   ${\cal D'}=\emptyset$, and by formula (\ref{D-eff-formula}) we obtain
$\emptyset\neq R_D^{n}Z_0({\cal D})\subset R_D^{n-1}Z_1({\cal D})\subset\ldots\subset R_D^{0}Z_{n}({\cal D})= Z_{n}({\cal D})$. Hence  $Z_{n}({\cal D})\neq\emptyset$, for any $n\in{\mathbb N}_0$. 
\end{proof}
Below we formulate the concept of a weakly effective family $\cal D$, which is also sufficient to produce nonempty sets of $\cal D$-polynomials of degree $n$.
\begin{df} Let ${\cal D}\subset{\cal R}^+(X)$ and $D\in\cal D$. 
We say that $D$ is a regular element of $\cal D$ (or $\cal D$-regular,  for short) if there exists a right inverse $R\in{\cal R}_D$ such that  
\begin{equation}\label{proper}
\forall_{D'\in{\cal D}\setminus\{D\}}\;
R(ker\, D')\subset ker\, D'\; .
\end{equation}
A right inverse $R\in{\cal R}_D$ is said to be $\cal D$-proper if  condition (\ref{proper}) is fulfilled.
\end{df}
\begin{df}
A  family ${\cal D}\subset{\cal R}^+(X)$ is said to be weakly effective if there exists a $\cal D$-regular element $D\in\cal D$. 
\end{df}

Evidently, for any $D\in{\cal R}^+(X)$, the family ${\cal D}=\{D\}$ is weakly effective. Moreover, in such a case any element $R\in{\cal R}_D$ is a $\cal D$-proper right inverse.
Indeed, in this case ${\cal D}\setminus\{D\}=\emptyset$ and formula (\ref{proper}) is fulfilled trivially (by logic). 
\begin{p}
Suppose ${\cal D}\subset{\cal R}^+(X)$ is a weakly effective family which does not separate elements of $X$. Then 
$Z_n({\cal D})\neq\emptyset$, for any $n\in{\mathbb N}_0$.
\end{p}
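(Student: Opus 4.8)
The plan is to build, for every $n\in{\mathbb N}_0$, an explicit element of $Z_n({\cal D})$. Since $\cal D$ is weakly effective, fix a $\cal D$-regular element $D\in{\cal D}$ and a $\cal D$-proper right inverse $R\in{\cal R}_D$, so that $DR=I$ and $R(ker\,D')\subset ker\,D'$ for every $D'\in{\cal D}\setminus\{D\}$. Because $\cal D$ does not separate elements of $X$ we have $Z({\cal D})\neq\{0\}$, hence we may choose a nonzero $z\in Z({\cal D})=P_0({\cal D})$; then $z\in Z_0({\cal D})$ and $D'z=0$ for all $D'\in{\cal D}$. I claim that $R^{n}z\in Z_{n}({\cal D})$ for every $n$, which gives $Z_{n}({\cal D})\neq\emptyset$ at once.

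First I would verify $R^{n}z\in P_{n}({\cal D})$ by induction on $n$, the case $n=0$ being just $z\in P_0({\cal D})$. For the inductive step one must show $D_0\cdots D_{n+1}(R^{n+1}z)=0$ for every tuple $(D_0,\ldots,D_{n+1})\in{\cal D}^{n+2}$, and the crux is the action of the last factor $D_{n+1}$ on $R^{n+1}z$. If $D_{n+1}=D$, then $D_{n+1}R^{n+1}z=R^{n}z$, and the remaining product $D_0\cdots D_{n}$ annihilates it by the inductive hypothesis $R^{n}z\in P_{n}({\cal D})$. If $D_{n+1}=D'\neq D$, then from $z\in ker\,D'$ and the invariance $R(ker\,D')\subset ker\,D'$ one gets, by iteration, $R^{n+1}z\in ker\,D'$, so already $D_{n+1}R^{n+1}z=0$. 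In either case the whole product vanishes, completing the induction.

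Next I would check that $R^{n}z\notin P_{n-1}({\cal D})$ for $n\geq 1$ (for $n=0$ it suffices that $R^{0}z=z\neq 0$). Here the identity $DR=I$ does the work: iterating it yields $D^{n}R^{n}=I$, so if $R^{n}z$ were in $P_{n-1}({\cal D})$ then taking the tuple $(D,\ldots,D)\in{\cal D}^{n}$ would force $0=D^{n}R^{n}z=z$, contradicting $z\neq 0$. Hence $R^{n}z\in P_{n}({\cal D})\setminus P_{n-1}({\cal D})=Z_{n}({\cal D})$, and the proof is complete.

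I expect the only delicate point to be the inductive step in the second paragraph: one has to split into the two ways a factor $D_i$ can act on a power of $R$ applied to $z$ --- lowering the exponent of $R$ when $D_i=D$, and killing the element outright when $D_i\neq D$ by $\cal D$-properness --- and be careful to invoke the invariance hypothesis for the correct operator. Everything else reduces to $DR=I$ and the definitions of $P_n({\cal D})$ and $Z_n({\cal D})$.
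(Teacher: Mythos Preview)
Your proof is correct and follows essentially the same approach as the paper: both pick a $\cal D$-regular $D$, a $\cal D$-proper $R$, a nonzero $z\in Z({\cal D})$, and then show $R^{n}z\in Z_{n}({\cal D})$ using the invariance $R(\ker D')\subset\ker D'$ for $D'\neq D$ together with $D^{n}R^{n}z=z\neq 0$. Your version is in fact more carefully argued --- the paper jumps directly from the facts $R^{n}z\in\ker D'$ (all $D'\neq D$), $D^{n}R^{n}z=z$, $D^{n+1}R^{n}z=0$ to the conclusion $R^{n}z\in Z_{n}({\cal D})$, whereas you supply the explicit induction verifying $R^{n}z\in P_{n}({\cal D})$ by splitting on whether the rightmost factor in an arbitrary tuple equals $D$ or not.
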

\begin{proof}
Since family ${\cal D}$ does not separate elements of $X$, we get $dim Z({\cal D})>0$ 
and consequently $Z_0({\cal D})=Z({\cal D})\setminus\{0\}\neq\emptyset$. 
Now, let $D\in{\cal D}$ be a regular element of $\cal D$ and $R\in{\cal R}_D$ be its ${\cal D}$-proper right inverse. This means that, for any $z\in Z_0({\cal D})$ and $n\in\mathbb N$, there is $R^nz\in ker D'$, for any $D'\in{\cal D}\setminus\{D\}$.
On the other hand, $D^nR^nz=z$ and $D^{n+1}R^nz=0$, which means that 
$R^nz\in Z_n({\cal D})$, for $z\neq 0$ and $n\in\mathbb N$.
\end{proof}
The following statement shows the hierarchy relation between the two effectivity concepts, which motivates the term "weakly".
\begin{p}
Any nonempty effective family $\cal D$ is weakly effective.
\end{p}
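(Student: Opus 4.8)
The plan is to show that \emph{every} element of a nonempty effective family $\cal D$ is already $\cal D$-regular, using the $\cal D$-proper family of right inverses that witnesses effectivity. Concretely, since $\cal D\neq\emptyset$ I would fix an arbitrary $D\in\cal D$ and let $\{R_{D'}\}_{D'\in\cal D}$ be a $\cal D$-proper family as in Definition~\ref{effectiveD}. I then claim that $R_D\in{\cal R}_D$ satisfies condition~(\ref{proper}), so that $D$ is a $\cal D$-regular element and hence $\cal D$ is weakly effective.

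The crux is to feed the right subfamily into the effectivity condition~(\ref{D-eff-formula}). Given $D'\in{\cal D}\setminus\{D\}$, I would apply (\ref{D-eff-formula}) with the particular choice ${\cal D'}:={\cal D}\setminus\{D'\}$ and with $n=0$. Then ${\cal D}\setminus{\cal D'}=\{D'\}$, and since $D\neq D'$ we have $D\in{\cal D'}$, so the second alternative of (\ref{D-eff-formula}) is the relevant one and it yields
\[
R_D\bigl(Z_0(\{D'\})\bigr)\subset Z_0(\{D'\}).
\]
It then remains to translate this into a statement about $ker\, D'$. By Definition~\ref{PnD} one has $P_0(\{D'\})=ker\, D'$, hence $Z_0(\{D'\})=P_0(\{D'\})\setminus\{0\}=ker\, D'\setminus\{0\}$ by Definition~\ref{ZnDdefinition}; since also $R_D0=0$, we conclude $R_D(ker\, D')\subset ker\, D'$. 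As $D'$ was an arbitrary element of ${\cal D}\setminus\{D\}$, condition~(\ref{proper}) holds for $R_D$, which completes the argument.

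I do not expect a genuine obstacle here; the work is entirely a matter of bookkeeping. The points to be careful about are: (i) that for a singleton family the space $P_0(\{D'\})$ really collapses to $ker\, D'$ (so that $Z_0$ for the singleton coincides with the single-operator notation $Z_0(D')$ of Section~2); (ii) keeping track of the removed zero vector when passing between $Z_0(\{D'\})$ and $ker\, D'$; and (iii) observing that only the instance $n=0$, and only the one cofinite subfamily ${\cal D'}={\cal D}\setminus\{D'\}$, of the effectivity condition are needed, so its full strength over all $n$ and all subfamilies is not invoked. Finally, one should recall the Section~3 convention that $R_D$ is defined on all of $X$, so that $ker\, D'\subset dom\, R_D$ and the inclusion $R_D(ker\, D')\subset ker\, D'$ is meaningful, and note that the hypothesis ``nonempty'' is essential, since the empty family, although effective by convention, possesses no $\cal D$-regular element and is therefore not weakly effective.
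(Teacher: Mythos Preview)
Your proposal is correct and follows essentially the same route as the paper: fix $D\in\cal D$, take $R_D$ from the $\cal D$-proper family, and for each $D'\neq D$ apply (\ref{D-eff-formula}) with ${\cal D'}={\cal D}\setminus\{D'\}$ and $n=0$ to obtain $R_D(ker\,D')\subset ker\,D'$. You are in fact a bit more careful than the paper in spelling out the passage from $Z_0(\{D'\})$ to $ker\,D'$ via $R_D0=0$.
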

\begin{proof}
Let $\cal D$ be a nonempty effective family. A singleton ${\cal D}=\{D\}$ is evidently an efective family. Let us assume now that 
$D, D'\in\cal D$, $D\neq D'$, and ${\cal D'}={\cal D}\setminus\{D'\}$. 
Since $D\in\cal D'$, by formula (\ref{D-eff-formula}), 
for $n=0$, we obtain 
$R_{D}(Z_0({\cal D}\setminus{\cal D'}))\subset Z_0({\cal D}\setminus{\cal D'})$. The last equality can be easily extended to
$R_{D}(Z({\cal D}\setminus{\cal D'}))\subset Z({\cal D}\setminus{\cal D'})$.
 Finally, because ${\cal D}\setminus{\cal D'}=\{D'\}$, we can write $R_{D}(ker D')\subset ker D'$. \end{proof}

Below we present two examples showing some effective families ${\cal D}\subset{\cal R}^+(X)$ and their $\cal D$-polynomials. 
In both examples use the notation $x\cdot y=x_1y_1+...+x_my_m$ and $x_{|k}=(x_1,...,x_{k-1},x_{k+1},...,x_m)\in\mathbb R^{m-1}$, for any
$x=(x_1,...,x_m), y=(y_1,...,y_m)\in\mathbb R^m$.
 \begin{ex} Let $X$ be the linear space of  functions $f\colon{\mathbb R^m}\rightarrow\mathbb R$, for a fixed  $m\in\mathbb N$, and let ${\cal D}=\{D_1,\ldots ,D_m\}$ be the family of the generalized derivations \cite{Moh}, i.e. $D_k=a_k+\frac{\partial}{\partial x_k}$, $a_k\in\mathbb R$. 
The elements of $Z(\{D_k\})$ are of the form $f(x)=C(x_{|k})e^{-a_kx_k}$ and consequently 
$Z({\cal D})$ consists of functions $f(x)=Ce^{-a\cdot x}$,
where $a, x\in{\mathbb R}^m$ and $C\in\mathbb R$. Hence, $dim Z({\cal D})>0$ and the family $Z({\cal D})$ does not separate elements of $X$. The following formula defines a $\cal D$-proper family of right inverses $R_k\in{\cal R}_{D_k}$, 
\begin{equation}\label{R_k}
R_kf(x)=e^{-a_kx_k}\cdot\int\limits_{0}^{x_k}f(x_1,\ldots ,x_{k-1},t,x_{k+1},\ldots ,x_m)\cdot e^{a_kt}dt\, , 
\end{equation}
for $k=1,\ldots ,m$. Hence, the family $\cal D$ is effective. Since $Z_0({\cal D})=Z({\cal D})\setminus\{0\}$, the polynomials of degree zero are simply functions $f(x)=Ce^{-a\cdot x}$, where $a, x\in{\mathbb R}^m$ and $C\in{\mathbb R}\setminus\{0\}$. 
In turn, if we apply formula (\ref{R_k}) to $f(x)=e^{-a\cdot x}$, we can obtain $\cal D$-polynomials of higher degrees. For example $R_kf(x)=x_ke^{-a\cdot x}$  are $\cal D$-polynomials of degree 1, whereas $R_k^2f(x)=\frac{1}{2}x_k^2e^{-a\cdot x}$ or $R_iR_jf(x)=x_ix_je^{-a\cdot x}$, $i\neq j$, are $\cal D$-polynomials of degree 2.
\end{ex}
\begin{ex} Let $X$ be a linear space of functions $x\colon{\mathbb N^m}\rightarrow\mathbb R$, for a fixed  $m\in\mathbb N$,
${\cal D}=\{D_1,\ldots ,D_m\}$, where $(D_kx)(i_1,...,i_m)=x(i_1,...,i_{k-1},i_k+1,i_{k+1},...,i_m)-x(i_1,...,i_m)$ and let us take the right inverses $R_k\in{\cal R}_{D_k}$ defined by $(R_kx)(i_1,...i_{k-1},1,i_{k+1},...,i_m)=0$, $(R_kx)(i_1,...i_{k-1},n+1,i_{k+1},...,i_m)=\sum\limits_{j=1}^{n}x(i_1,...i_{k-1},j,i_{k+1},...,i_m)$, for $n\geq 1$. One can check that  $\{R_k\}_{k=1}^{m}$ is a $\cal D$-proper family, which means that the family $\cal D$ of partial difference operators is effective.
Evidently, the space $Z({\cal D})$ consists of all constant functions, hence the elements of $Z_0({\cal D})$ are nonzero constant functions. In turn, functions $x(i_1,...,i_m)=x_j$, $j=1,...,m$, are the examples of $\cal D$-polynomials of degree 1, whereas functions $x(i_1,...,i_m)=i_k^2$ or $x(i_1,...,i_m)=i_ki_j$ are $\cal D$-polynomials of degree 2.\vspace{3mm}\newline
{\bf Remark.} Effectivity property plays a significant role in the algebraic formulation of the Taylor formula specified for a family $\cal D$ of many right invertible operators, which is the algebraic counterpart of the well known Taylor formula for functions of many variables.
\end{ex}

\end{document}